\documentclass[a4paper,11pt]{amsart}
\usepackage[utf8]{inputenc}          
\usepackage[T1]{fontenc}
\usepackage{amsmath,amssymb}
\usepackage{tikz-cd}                 
\usepackage[all]{xy}                 
\usepackage{xcolor}                  
\definecolor{Periwinkle}{rgb}{0.8,0.8,1.0}
\definecolor{Emerald}{rgb}{0.0,0.8,0.4}
\usepackage[colorlinks, linkcolor=blue, anchorcolor=Periwinkle,
    citecolor=blue, urlcolor=Emerald]{hyperref}
\usepackage[mathscr]{euscript}

\SelectTips{cm}{}

\DeclareUnicodeCharacter{2212}{-}

\newcommand{\ie}{{\em i.e.}\ }

\newcommand{\ko}{\: , \;}

\numberwithin{equation}{subsection}
\newtheorem{theorem}[subsection]{Theorem}
\newtheorem{classification-theorem}[subsection]{Classification Theorem}
\newtheorem{decomposition-theorem}[subsection]{Decomposition Theorem}
\newtheorem{definition}[subsection]{Definition}
\newtheorem{periodicity-conjecture}[subsection]{Periodicity Conjecture}

\newtheorem{proposition}[subsection]{Proposition}
\newtheorem{corollary}[subsection]{Corollary}

\newtheorem{example}[subsection]{Example}
\newtheorem{remark}[subsection]{Remark}

\newcommand{\reminder}[1]{}

\newcommand{\opname}[1]{\operatorname{\mathsf{#1}}}

\renewcommand{\mod}{\opname{mod}\nolimits}

\newcommand{\proj}{\opname{proj}\nolimits}

\newcommand{\per}{\opname{per}\nolimits}

\renewcommand{\Im}{\opname{Im}\nolimits}

\newcommand{\Z}{\mathbb{Z}}

\newcommand{\Ker}{\opname{Ker}}

\newcommand{\Hom}{\opname{Hom}}
\newcommand{\sg}{\opname{sg}}

\newcommand{\rad}{\opname{rad}}

\newcommand{\ten}{\otimes}

\newcommand{\Tor}{\opname{Tor}}

\newcommand{\ca}{{\mathscr A}}
\newcommand{\cb}{{\mathscr B}}
\newcommand{\cc}{{\mathscr C}}
\newcommand{\cd}{{\mathscr D}}

\newcommand{\cs}{{\mathscr S}}

\renewcommand{\phi}{\varphi}

\begin{document}

\date{June 7, 2026} 

\title[Singular Hochschild complex and Cartan matrix]{Singular Hochschild complex and Cartan matrix}
\author{Yu Wang}
\address{School of Mathematics and Statistics, Taiyuan Normal University, Shanxi 030619, P. R. China}
\email{yu.wang@tynu.edu.cn}

\author{Xiaozhuan Liang}
\address{School of Mathematics and Statistics, Taiyuan Normal University, Shanxi 030619, P. R. China}
\email{3518143443@qq.com}

\begin{abstract}
If $A$ is a symmetric algebra, then Hochschild homology
of the dg enhancement of the singularity category of $A$ agrees with singular Hochschild
homology of $A$. For a basic finite‑dimensional $k$-algebra $A$, the Cartan matrix of $A$ is symmetric if and only if the $k$-dual of the mixed complex of the dg enhancement of its singularity category is isomorphic to its shift by −1. We provide two counterexamples to show that neither result holds for general Frobenius algebras. 
\end{abstract}

\keywords{Hochschild homology, singularity category, differential graded category}

\maketitle

\section{Introduction}

Let $k$ be a commutative ring and $A$ be a
right Noetherian (non-commutative) $k$-algebra projective over $k$. We write $\ten$ for $\ten_k$. 
The {\em stable derived category} or {\em singularity category} of $A$ is
defined as the Verdier quotient 
\[
\sg(A)=\cd^b(\mod A)/\per(A)
\]
of the bounded derived category of finitely generated (right) $A$-modules by
the {\em perfect derived category $\per(A)$}, 
\ie the full subcategory of complexes quasi-isomorphic
to bounded complexes of finitely generated projective modules. It was introduced
by Buchweitz in an unpublished manuscript \cite{Buchweitz86} in 1986
and rediscovered, in its scheme-theoretic variant, by Orlov in 2003 \cite{Orlov04}.
This category vanishes when $A$ has finite global dimension.

Suppose that the enveloping algebra $A^e=A\ten A^{op}$ is also right
Noetherian. Hochschild homology $HH_{n}(A)$ of the algebra $A$ can be defined as the homology of the Hochschild chain complex or, equivalently, by 
\[
HH_{n}(A) \cong \Tor^{A^e}_n(A,A)\ko n\in\Z .
\]
In particular, $HH_0(A)\cong A/[A,A]$ is the quotient by commutators.  Hochschild homology is invariant under Morita and derived equivalences. One similarly defines Hochschild homology $HH_{*}(\mathcal{C})$ for any dg category $\mathcal{C}$ \cite{Keller98}, in particular for the canonical dg enhancement $\cs$ of the singularity category of $A$ \cite{ChenWang23,WAK}. It is a dg quotient  category \cite{Drinfeld04, Keller99}. 

For a Gorenstein algebra $A$, one considers the singular Hochschild homology $HH_{*}^{sg}(A)$, defined (for example) using a complete (Tate) resolution of $A$ over $A^e$ \cite{BerghJorgensen13}. If the Gorenstein dimension of the enveloping algebra is $d$, then for every $n\geq d+1$
there are isomorphisms $HH_{n}^{sg}(A)\cong HH_{n}(A)$. In particular, if $A$ is a Frobenius algebra, then
there are isomorphisms $HH_{n}^{sg}(A)\cong HH_{n}(A)$ for every $n\geq 1$. Moreover, there are isomorphisms \cite{BerghJorgensen13, EuSchedler09}
\[HH_{n}^{sg}(A)\cong HH_{-(n+1)}^{sg}(A)\ko n\in\Z .\]

In this paper, our goal is to compare $HH_{*}^{sg}(A)$ and $HH_{*}(\cs)$. Our main result is the following.
\begin{theorem}
 Let $k$ be an algebraically closed field and $A$ a finite-dimensional basic $k$-algebra. The Cartan matrix of $A$ is symmetric if and only if $DM_*(\cs)\cong\Sigma ^{-1} M_*(\cs)$.  
\end{theorem}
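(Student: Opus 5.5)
The plan is to compute $M_*(\cs)$ from the localization sequence of dg categories attached to the Verdier quotient $\sg(A)=\cd^b(\mod A)/\per(A)$. By Keller's localization theorem for Hochschild homology of dg quotients \cite{Keller99, Keller98}, the exact sequence $\per(A)\to\cd^b_{dg}(\mod A)\to\cs$ gives a triangle of mixed complexes
\[
M_*(A)\xrightarrow{\ \theta\ } M_*(\cd^b_{dg}(\mod A))\to M_*(\cs)\to \Sigma M_*(A).
\]
Here I use Morita invariance, $M_*(\per A)\simeq M_*(A)$. So the whole question reduces to identifying the middle term and the map $\theta$.

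\textbf{Identifying the middle term.} Let $S=A/\rad A$. Since $k$ is algebraically closed and $A$ is basic, $S\cong k^n$ with simples $S_1,\dots,S_n$. Moreover $\cd^b(\mod A)=\opname{thick}(S)$, so $\cd^b_{dg}(\mod A)$ is Morita equivalent to the Koszul dual dg algebra $E=\mathbf{R}\Hom_A(S,S)$. I would then use Koszul duality between the finite-dimensional algebra $A$ and $E$, via the bimodule $S$ and the duality $D=\Hom_k(-,k)$, to identify $M_*(E)\simeq DM_*(A)$ as mixed complexes, with $D$ the graded $k$-dual (degree $n$ going to degree $-n$). Under this identification, $\theta\colon M_*(A)\to DM_*(A)$ should be the map adjoint to a natural pairing
\[
M_*(A)\otimes M_*(A)\to k,
\]
namely the Euler/Chern-character pairing $\chi(P,M)=\sum_i(-1)^i\dim\Hom(P,M)$ between $\per(A)$ and $\cd^b(\mod A)$. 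On $HH_0(A)$, which has the idempotents $e_1,\dots,e_n$ as basis modulo commutators and $\rad A$, this pairing sends $([e_iA],[S_j])$ to $\dim e_jAe_i$. So on degree-zero homology $\theta$ is given by the Cartan matrix $C$, up to transpose depending on conventions.

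\textbf{Dualizing the triangle.} Applying $D$ to the triangle gives
\[
DM_*(\cs)\to M_*(A)\xrightarrow{D\theta} DM_*(A)\to \Sigma DM_*(\cs)\cdots,
\]
hence $DM_*(\cs)\simeq\Sigma^{-1}\opname{cone}(D\theta)$. Meanwhile $M_*(\cs)\simeq\opname{cone}(\theta)$. The map $D\theta$ is adjoint to the transposed pairing, and on $HH_0$ it is represented by $C^{T}$.

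\textbf{Forward direction.} If $C$ is symmetric, I want to show that $D\theta$ and $\theta$ agree in the homotopy category of mixed complexes, which gives $DM_*(\cs)\cong\Sigma^{-1}M_*(\cs)$. I expect this to be the main obstacle: a priori the pairing lives on all of $M_*(A)$, not only on $HH_0$. My first attempt would be to show that $\theta$ factors through the projection of $M_*(A)$ onto the semisimple part. Concretely, $\theta$ factors as $M_*(A)\to M_*(S)\to DM_*(S)\to DM_*(A)$, because the restriction of $E$ to its degree-zero part $\Hom_A(S,S)=S$ controls the Chern characters of the $S_j$. In that case $\theta$ is concentrated in the $HH_0$ data, where it is literally the Cartan matrix, and symmetry of $C$ gives $D\theta\simeq\theta$.

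\textbf{Converse.} Assume $DM_*(\cs)\cong\Sigma^{-1}M_*(\cs)$ and compare the lowest homology groups, $HH_0$ and $HH_{-1}$ of $\cs$, as modules over the natural $k^n$-action coming from the idempotents (the Grothendieck group action). One side computes $\opname{coker}$ and $\ker$ of $C$, the other those of $C^T$, together with their position relative to the basis of simples. Dimension counts alone do not distinguish $C$ from $C^T$, so the argument must retain this extra structure. The isomorphism must respect the Euler-form structure induced by the triangle, and that should force $C=C^{T}$. Making this precise is the second delicate point. I would first try it on small examples, such as the counterexamples promised in the abstract, to see which invariant detects the asymmetry.
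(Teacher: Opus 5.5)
Your proposal has two genuine gaps. The first is structural rather than technical.

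\textbf{The converse cannot be proved in the framework you set up.} You read $\cong$ as an isomorphism up to quasi-isomorphism of mixed complexes, comparing $\operatorname{cone}(\theta)$ with $\operatorname{cone}(D\theta)$. In that reading the converse is false. Take $A=kQ$ with $Q\colon 1\to 2$. Then $A$ has finite global dimension, so $\sg(A)=0$ and $M_*(\cs)$ is acyclic. This agrees with the explicit model: $HH_n(A)=0$ for $n\ge 1$, and $\tau$ is the invertible, non-symmetric matrix $\left(\begin{smallmatrix}1&0\\1&1\end{smallmatrix}\right)$ on $HH_0(A)=k^2$. Hence $DM_*(\cs)\cong\Sigma^{-1}M_*(\cs)$ in every derived sense, while $C$ is not symmetric.

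Honest complexes do not help either. Up to the termwise double-dual maps and signs, the explicit complexes $DM_*(\cs)$ and $\Sigma^{-1}M_*(\cs)$ differ only in the middle map, $D\tau$ versus $\tau$. These have the same rank, so the two complexes are always abstractly isomorphic. The extra invariant you hope to extract from homology and the idempotent action therefore does not exist.

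The statement only has content when the isomorphism is required to be the canonical one, termwise $\phi\colon X\to DDX$. That is how the paper's Proposition~\ref{complex} reads it. In your language the condition is $D\theta=\theta$, not $\operatorname{cone}(D\theta)\cong\operatorname{cone}(\theta)$. With that reading the converse is immediate: the canonical map is a chain map iff $\tau=D\tau\circ\phi$, i.e.\ iff $\langle a,b\rangle=\langle b,a\rangle$ for all $a,b$. Restricting to the idempotents then gives $C=C^{T}$.

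\textbf{The forward direction rests on an unproved factorization.} Identifying $\theta$ with an Euler pairing via Koszul duality is heuristic. So is the claim that $\theta$ factors through $M_*(S)$ because the degree-zero part of $E$ ``controls the Chern characters''. The paper never uses Koszul duality. It takes from \cite{WAK} the explicit model in which the connecting map is $\tau\colon A\to DA$, $\tau(a)(b)=Tr(\lambda_a\circ\rho_b)$, concentrated in a single degree. It then proves the factorization by an elementary argument (Proposition~\ref{Cartan}):
\begin{itemize}
\item Since $(\lambda_a\rho_b)^m=\lambda_{a^m}\rho_{b^m}$, the operator $x\mapsto axb$ is nilpotent as soon as $a$ or $b$ lies in $\rad A$. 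So $\langle a,b\rangle=0$ in that case.
\item The operator $\lambda_{e_j}\rho_{e_i}$ is the projection onto $e_jAe_i$, so $\langle e_j,e_i\rangle=\dim e_jAe_i$.
\end{itemize}
By Gabriel's theorem $A\cong kQ/I$. In a basis of trivial paths followed by nontrivial paths, the matrix of $\tau$ is $C$ padded by zeros. This is precisely your factorization $M_*(A)\to M_*(S)\to DM_*(S)\to DM_*(A)$, now justified, and it gives both directions at once.

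So replace the Koszul-duality step by the \cite{WAK} formula plus this nilpotency argument, and fix the meaning of $\cong$ as the canonical identification.
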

The complex $M_*(\cs)$ is  introduced by Wang, Arunachalam and Keller \cite{WAK} to study $HH_{*}(\cs)$ and $D$ is the $k$-dual.
\begin{corollary}
If $A$ is symmetric, then we have a canonical isomorphism
\[HH_{*}^{sg}(A)\cong HH_{*}(\cs).\]
\end{corollary}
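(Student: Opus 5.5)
The plan is to deduce the statement from the Theorem, the Bergh--J{\o}rgensen/Eu--Schedler duality $HH_{n}^{sg}(A)\cong HH_{-(n+1)}^{sg}(A)$, and the description of $HH_*(\cs)$ through the mixed complex $M_*(\cs)$ of \cite{WAK}. I will work in the setting of the Theorem: $k$ algebraically closed and $A$ finite-dimensional and basic, which is harmless by Morita invariance of both sides.

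\emph{Step 1: the Cartan matrix of $A$ is symmetric.} A symmetric algebra carries a nondegenerate symmetric associative form, so $DA\cong A$ as bimodules. For primitive idempotents this gives
\[
\dim_k e_iAe_j=\dim_k \Hom_A(e_jA,e_iA)=\dim_k D\Hom_A(e_iA,e_jA)=\dim_k e_jAe_i .
\]
Hence $C_A$ is symmetric, and the Theorem yields an isomorphism $DM_*(\cs)\cong\Sigma^{-1}M_*(\cs)$.

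\emph{Step 2: consequences for $HH_*(\cs)$.} Taking homology, and using that $M_*(\cs)$ computes $HH_*(\cs)$ as in \cite{WAK}, we get
\[
HH_n(\cs)\cong D\,HH_{-n-1}(\cs)\ko n\in\Z .
\]
This has exactly the shape of the Tate duality for $HH^{sg}_*(A)$.

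\emph{Step 3: comparison in high degrees.} For $n\ge 1$ I will use the results of \cite{WAK}: $M_*(\cs)$ is built from the Hochschild complex of $A$ together with a finite-dimensional correction governed by the Cartan matrix, so it agrees with the Hochschild complex in positive degrees. This gives $HH_n(\cs)\cong HH_n(A)\cong HH_n^{sg}(A)$ for $n\ge 1$, the last isomorphism because $A$ is Frobenius.

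\emph{Step 4: negative degrees.} For $n\le -2$ we have $-n-1\ge 1$. Combining Step 2, Step 3 and the duality for $HH^{sg}_*$ gives
\[
HH_n(\cs)\cong D\,HH_{-n-1}(\cs)\cong D\,HH_{-n-1}^{sg}(A)\cong D\,HH_{n}^{sg}(A) .
\]
To remove the $D$, I will use that for symmetric $A$ the bimodule isomorphism $DA\cong A$ identifies the negative Tate homology with the $k$-dual of Tate homology in the mirror degree. Concretely, $D\,HH_{m}(A)\cong HH^{m}(A,DA)\cong HH^m(A)$, and negative Tate homology is computed by Hochschild cohomology, so $D\,HH^{sg}_{n}(A)\cong HH^{sg}_{-n-1}(A)\cong HH^{sg}_n(A)$.

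\emph{Step 5: the degrees $0$ and $-1$ (main obstacle).} Here $HH^{sg}_0(A)$ is the cokernel and $HH^{sg}_{-1}(A)$ the kernel of the norm map $HH_0(A)=A/[A,A]\to Z(A)\cong HH^0(A)$ obtained by splicing the complete resolution. On the $\cs$ side, the degree $0$ and $-1$ homology of $M_*(\cs)$ is the cokernel and kernel of the map induced by the Cartan matrix on $HH_0$. I expect this to be the hardest step, and I will try first to identify the two maps explicitly. Under $DA\cong A$ the Cartan map $k^{r}\to k^r$, restricted to the span of the classes of the $e_i$ in $A/[A,A]$, should coincide with the norm map $\sum_j x_j a y_j$ on the semisimple part. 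Once both are exhibited as the same morphism between the same spaces, degrees $0$ and $-1$ match, and the canonicity of all isomorphisms used gives the claimed canonical isomorphism.
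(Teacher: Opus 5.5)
Your Step~3 is false, and the off-by-one it introduces runs through Steps~2, 4 and~5. Since $M_*(\cs)$ is the cone of $MA\to DMA$, the copy of $A$ sits in homological degree $1$ (cohomological degree $-1$, as the paper puts it) and $DA$ sits in degree $0$. This is how the paper obtains $HH_p(\cs)\cong HH_{p-1}(A)$ for $p\ge 2$, $HH_p(\cs)\cong DHH_{-p}(A)$ for $p\le -1$, and the exact sequence $0\to HH_1(\cs)\to HH_0(A)\xrightarrow{\tau} DHH_0(A)\to HH_0(\cs)\to 0$. So $M_*(\cs)$ agrees with the \emph{shifted} Hochschild complex in degrees $\ge 2$, and $HH_1(\cs)=\Ker\tau$, not $HH_1(A)$.

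With the normalization you adopt ($HH^{sg}_n(A)\cong HH_n(A)$ for $n\ge 1$), the degreewise isomorphism you are aiming for is actually false. Take $A=kG$ with $G=(\Z/2)^2$ and $\operatorname{char}k=2$. Then $\tau=0$, because $Tr(\lambda_g)=4\delta_{g,1}=0$. Hence $\dim HH_1(\cs)=4$ but $\dim HH_1(A)=8$, and $\dim HH_2(\cs)=\dim HH_1(A)=8$ but $\dim HH_2(A)=12$. The corollary holds in the paper's grading of $CR$: ${}_{\nu^{-1}}P_0^\vee$ in degree $0$ and $P_n$ in degree $n+1$, so that $HH^{sg}_p(A)\cong HH_{p-1}(A)$ for $p\ge 2$.

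For the same reason, $DM_*(\cs)\cong\Sigma^{-1}M_*(\cs)$ gives $HH_n(\cs)\cong DHH_{1-n}(\cs)$, not $DHH_{-n-1}(\cs)$. The exceptional degrees on the $\cs$ side are $1$ (where one finds $\Ker\tau$) and $0$ (where one finds $\operatorname{coker}\tau$), while $HH_{-1}(\cs)\cong DHH_1(A)$. Also, ``removing the $D$'' in Step~4 is only a non-canonical isomorphism of finite-dimensional spaces, so that route cannot yield a canonical isomorphism.

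Once the grading is corrected, Steps~2 and~4 are not needed. The triangle $MA\to DMA\to M(\cs)\to\Sigma MA$ together with $A\otimes_{A^e}{}_{\nu^{-1}}P_p^\vee\cong\Hom_{A^e}(P_p,DA)$ gives $HH^{sg}_p(A)\cong HH_p(\cs)$ directly for $p\notin\{0,1\}$, as in the paper. In degrees $1$ and $0$, both sides are the kernel and cokernel of a map $HH_0(A)\to DHH_0(A)$: on one side $\tau$, on the other the map induced by the splice $P_0\to A\to{}_{\nu^{-1}}P_0^\vee$. The paper handles these two degrees by citing Theorem~\ref{main} and the Eu--Schedler duality. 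Those only relate degree $0$ to degree $1$ on each side separately, so your Step~5 idea of identifying $\tau$ with the norm map is what really closes the comparison.

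However, you leave Step~5 at ``should coincide''. The missing lemma is elementary. If $t$ is a symmetrizing form and $\{x_i\},\{y_i\}$ are bases of $A$ with $t(x_iy_j)=\delta_{ij}$, then $Tr(f)=\sum_i t(x_if(y_i))$ for every $k$-linear $f\colon A\to A$. Taking $f=\lambda_a\circ\rho_b$ gives $\langle a,b\rangle=t\bigl((\sum_i x_iay_i)\,b\bigr)$. Thus $\tau$ is the norm $a\mapsto\sum_i x_iay_i\in Z(A)$ followed by $Z(A)\cong HH^0(A,DA)\cong DHH_0(A)$. Up to the standard identifications, this is what the splice induces after applying $A\otimes_{A^e}-$. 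This identifies the two maps on all of $HH_0(A)$, with no restriction to the idempotents. It also makes Step~1, Theorem~\ref{main}, and your reduction to $k$ algebraically closed and $A$ basic unnecessary for the corollary.
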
 
There is an exact sequence:
\[
\xymatrix{
0\ar[r] &HH_{1}(\cs) \ar[r] &HH_{0}(A)\ar[r] &DHH_{0}(A) \ar[r] & HH_{0}(\cs) \ar[r] & 0.}
\]
In the paper \cite{WAK}, they defined a natural trace pairing on $A$ by
\[
\langle a,b\rangle = Tr(\lambda_a\circ \rho_b),
\]
where $\lambda_a,\rho_b$ denote left- and right-multiplication by $a,b\in A$.  We prove that this pairing is symmetric if and only if the Cartan matrix of $A$ is symmetric.  As a consequence, for basic algebras with symmetric Cartan matrix, the mixed complex computing $HH_*(\cs)$ is self-dual up to a shift. We illustrate these results with explicit examples, including computations of trace pairings for some finite dimensional algebras. We give two examples of Frobenius algebras for which the isomorphisms $DM_*(\cs)\cong\Sigma ^{-1} M_*(\cs)$ and $HH_{*}^{sg}(A)\cong HH_{*}(\cs)$ do not hold. 

In section \ref{basic}, we recall basic definitions and properties of Hochschild homology and mixed complexes. In section \ref{s:main}, we prove our main results and present the counterexamples.

\section{Hochschild Homology and Mixed Complexes}
\label{basic}

Let $\cd A=\cd(\mod A)$ denote the unbounded derived category of finitely generated (right) $A$-modules.

Hochschild homology $HH_{*}(A)$ of $A$ is the homology of the Hochschild complex $C_*(A)$: 
\[
\xymatrix{
A  & A\otimes A \ar[l] &\cdots\ar[l] & A^{\otimes p} \ar[l] & A^{\otimes p+1} \ar[l] &\cdots\ar[l]}
\]
with $C_{p}(A)=A^{\otimes(p+1)}$, $p\ge0$, and differential given by
\[
d(a_{0},\ldots, a_{p})=\sum_{i=0}^{p-1}(-1)^{i}(a_{0},\ldots, a_{i}a_{i+1},\ldots, a_{p})+(-1)^p(a_pa_0,\ldots, a_{p-1}),
\]
where we write $(a_{0},\ldots, a_{p})$ for $a_{0}\otimes\ldots\otimes a_{p}$. Notice that the first differential takes $a\otimes b$ to the commutator $ab-ba$.
\begin{remark}
We see $HH_{0}(A)=A/[A, A]$, where $[A, A]$ is a $k$-submodule of $A$ and is generated by $ab-ba$ for any $a, b\in A$. If $[A, A]$ is not an ideal of the algebra $A$, then $HH_{0}(A)$ is not an algebra. If $HH_{0}(A)$ is an algebra, then it is commutative.
\end{remark}
The definition of the Hochschild complex can be extended from a $k$-algebra $A$ to a small $k$-category $\ca$.

\[
\begin{tikzcd}[column sep=small]
{\coprod\limits_{A_{0}\in\ca}\ca(A_{0}, A_{0})} & {\coprod\limits_{A_{0}, A_{1}\in\ca}\ca(A_{1}, A_{0})\otimes \ca(A_{0}, A_{1})} & \cdots \\
{} & {\coprod\limits_{A_{0},\ldots, A_{p}\in\ca}\ca(A_{p}, A_{0})\otimes\ca(A_{p-1}, A_{p})\otimes\ldots\otimes\ca(A_{0}, A_{1})} & \cdots
\arrow[from=1-2, to=1-1]
\arrow[from=1-3, to=1-2]
\arrow[from=2-2, to=2-1]
\arrow[from=2-3, to=2-2]
\end{tikzcd}
\]

with $C_{p}(\ca)=\coprod\limits_{A_{0},...,A_{p}\in\ca}\ca(A_{p}, A_{0})\otimes\ca(A_{p-1}, A_{p})\otimes\ldots\otimes\ca(A_{0}, A_{1})$, $p\ge0$, and differential given by
\[
d(a_{0},\ldots, a_{p})=\sum_{i=0}^{p-1}(-1)^{i}(a_{0},\ldots, a_{i}a_{i+1},\ldots, a_{p})+(-1)^p(a_pa_0,\ldots, a_{p-1}),
\]
where $a_{0}\in\ca(A_{p}, A_{0})$, $a_{i}\in\ca(A_{p-i}, A_{p-i+1})$ for all $p-i\ge 0$, and $(a_{0},\ldots,a_{p})$ denotes $a_{0}\otimes\ldots\otimes a_{p}$.

One has $HH_{*}(A)\cong HH_{*}(\proj A)$ in $\cd k$. This yields Morita invariance of Hochschild homology. The definitions further extend to small differential graded (dg) $k$-categories $\ca$. For example, $HH_{*}(A)\cong HH_{*}(\proj A)\cong HH_{*}(\cc^{b}_{dg}(\proj A))$ in $\cd k$, where $\cc^{b}_{dg}(\proj A)$ is the dg category of bounded projective complexes.
This yields derived Morita invariance of Hochschild homology.

\begin{theorem}\cite{Keller98}
If $\xymatrix{
0\ar[r] &\ca\ar[r] & \cb \ar[r] & \cc \ar[r] & 0}$ is a sequence of dg categories such that the derived sequence
\[
\xymatrix{
0\ar[r] &\cd\ca\ar[r] & \cd\cb \ar[r] & \cd\cc \ar[r] & 0}
\]
is exact, then there is a canonical triangle
\[
\xymatrix{
C(\ca)\ar[r] & C(\cb) \ar[r] & C(\cc) \ar[r] & \Sigma C(\ca) }
\] 
in the mixed derived category.
\end{theorem}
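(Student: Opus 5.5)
The plan is to reduce the statement to a computation with bimodules and derived tensor products, where the exactness hypothesis becomes a triangle of bimodules. The compatibility with the mixed structure will then be handled by a separate formal argument.

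First I replace $\ca$, $\cb$, $\cc$ by $k$-flat (for instance cofibrant) models. This changes neither the Hochschild complexes up to quasi-isomorphism of mixed complexes nor the derived categories. By derived Morita invariance, recalled above, I may also assume that $\ca\to\cb$ is the inclusion of a full dg subcategory. Using Drinfeld's theorem, I may further assume that $\cc$ is the dg quotient $\cb/\ca$: the hypothesis that $0\to\cd\ca\to\cd\cb\to\cd\cc\to 0$ is exact says exactly that the induced functor $\cb/\ca\to\cc$ is a Morita equivalence.

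The key step is to construct the canonical map as a morphism of mixed complexes and then check that it is a quasi-isomorphism; the latter only concerns the underlying complexes. The composite $C(\ca)\to C(\cb)\to C(\cc)$ is induced by the composite dg functor $\ca\to\cc$. This functor sends every object of $\ca$ to a contractible object of the dg quotient. I will give an explicit null-homotopy compatible with Connes' operator $B$, using the contracting homotopies $h_X$ adjoined in $\cb/\ca$. With this null-homotopy, the composite factors canonically through the cone, giving a morphism of mixed complexes
\[
\opname{Cone}\bigl(C(\ca)\to C(\cb)\bigr)\longrightarrow C(\cc),
\]
and hence the triangle in the mixed derived category.

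To see that this map is a quasi-isomorphism, I would use the description $C(\cb)\simeq \cb\ten^{L}_{\cb^e}\cb$. The exactness hypothesis gives a triangle in $\cd(\cb^e)$
\[
\cb\ten^{L}_{\ca}\cb \to \cb \to \cc \to \Sigma\, \cb\ten^{L}_{\ca}\cb .
\]
Here the first map is the multiplication. The cone is $\cc$ because $\cb\to\cc$ is a homological epimorphism, that is $\cc\ten^{L}_{\cb}\cc\simeq\cc$, and the kernel of the localization is generated by $\ca$. Applying $-\ten^{L}_{\cb^e}\cb$ to this triangle, I identify the three terms:
\begin{itemize}
\item $(\cb\ten^{L}_{\ca}\cb)\ten^{L}_{\cb^e}\cb\simeq \ca\ten^{L}_{\ca^e}\ca=C(\ca)$, by the cyclic rearrangement of tensor factors together with full faithfulness of $\ca\subset\cb$;
\item $\cb\ten^{L}_{\cb^e}\cb= C(\cb)$;
\item $\cc\ten^{L}_{\cb^e}\cb\simeq\cc\ten^{L}_{\cc^e}\cc=C(\cc)$, again by the homological epimorphism property.
\end{itemize}
Finally, I check that these identifications are compatible with the maps constructed in the previous step.

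I expect the main obstacle to be the homological epimorphism statement $\cc\ten^{L}_{\cb}\cc\simeq\cc$, equivalently exactness of the bimodule triangle, for the dg quotient. The second difficulty is keeping everything strictly compatible with $B$. For the first, I would try to deduce it from the fact that $\cd\cc$ is the Verdier quotient of $\cd\cb$, so that restriction along $\cb\to\cc$ is fully faithful. For the second, I would try to work directly with the explicit null-homotopy in Drinfeld's model rather than with abstract derived tensor products.
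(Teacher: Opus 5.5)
The paper gives no proof of this statement; it is quoted from \cite{Keller98}, so your proposal has to stand on its own.

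\textbf{What works.} The underlying-complex part is fine. The reductions to a full inclusion $\ca\subset\cb$ and to $\cc=\cb/\ca$, the bimodule triangle $\cb\ten^{L}_{\ca}\cb\to\cb\to\cc\to\Sigma\,\cb\ten^{L}_{\ca}\cb$, and the three identifications after applying $-\ten^{L}_{\cb^e}\cb$ are the standard ingredients. Your remedy for the homological epimorphism is also correct: restriction $\cd\cc\to\cd\cb$ is fully faithful iff $\cc\ten^{L}_{\cb}\cc\to\cc$ is invertible.

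\textbf{The gap.} It sits in the one step where the mixed structure actually enters: the morphism of mixed complexes $\opname{Cone}(C(\ca)\to C(\cb))\to C(\cc)$ built from an ``explicit null-homotopy compatible with $B$''. You do not give this homotopy, and nothing guarantees that a strictly $\Lambda$-linear one exists.
\begin{itemize}
\item Acyclic mixed complexes need not be contractible as dg $\Lambda$-modules. For example, $bx_n=y_n$, $Bx_n=y_{n-1}$, $By_n=0$ with $|x_n|=2n$, $|y_n|=2n+1$ ($n\in\Z$) is acyclic, but degree reasons force $(sB+Bs)(x_n)=x_{n-1}$ for every contraction $s$.
\item The naive candidate $(a_0,\dots,a_p)\mapsto(h a_0,a_1,\dots,a_p)$ is not even a homotopy for $b$. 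The last face map produces $a_p h a_0$ on one side and $h a_p a_0$ on the other, because the contractions $h_X$ are not natural. A perturbative correction gives no control over $B$.
\item The map out of the cone depends on the homotopy. Two choices differ by a map $\Sigma C(\ca)\to C(\cc)$, which is generally non-zero. So agreement on $C(\ca)\to C(\cb)$ and $C(\cb)\to C(\cc)$ does not pin the map down, and your final ``compatibility check'' is not a formality. Unless the homotopy is the right one, the resulting map need not be a quasi-isomorphism.
\end{itemize}

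\textbf{How to close it.} Avoid homotopies altogether.
\begin{itemize}
\item Let $\ca'\subset\cc$ be the full dg subcategory on $\opname{Ob}(\ca)$. The square formed by $\ca\subset\cb$, $\ca'\subset\cc$ and the quotient functor commutes strictly. It therefore induces a morphism of mixed complexes $\opname{Cone}(C(\ca)\to C(\cb))\to\opname{Cone}(C(\ca')\to C(\cc))$.
\item Each $\cc(X,Y)$ with $X,Y\in\ca'$ is contracted by $f\mapsto h_Y f$. Filtering by Hochschild degree shows that $C(\ca')$ is acyclic, so $C(\cc)\to\opname{Cone}(C(\ca')\to C(\cc))$ is a quasi-isomorphism of mixed complexes.
\item This gives a canonical morphism in the mixed derived category using only strict maps. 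It is the same device as Keller's mixed complex of a localization pair.
\end{itemize}

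To see that the map of cones is a quasi-isomorphism, run your bimodule argument on both rows.
\begin{itemize}
\item In $\cd(\cb^e)$, both $\opname{Cone}(\cb\ten^L_\ca\cb\to\cb)$ and $\opname{Cone}(\cc\ten^L_{\ca'}\cc\to\cc)$ are identified with $\cc$. For the second cone this holds because $\cc\ten^L_{\ca'}\cc\simeq 0$.
\item $\Hom_{\cd(\cb^e)}(\Sigma^n\,\cb\ten^L_\ca\cb,\cc)=0$ for all $n$. Indeed, the source is built from free bimodules at objects of $\ca$, and $\cc(A,A')$ is acyclic for $A,A'\in\ca$.
\item Hence a morphism from the first cone to $\cc$ is determined by its restriction to $\cb$. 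The map of cones, followed by the identification of the second cone with $\cc$, restricts to the canonical $\cb\to\cc$. It therefore equals the identification of the first cone with $\cc$, and is invertible.
\end{itemize}
Applying $-\ten^L_{\cb^e}\cb$, together with your identifications (which are natural in dg functors), finishes the argument.
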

We give a brief introduction to mixed complexes.

\begin{definition}\label{def:mixed}
A mixed complex $C$ is given by:
\[
\begin{tikzcd}[column sep=10mm]
\cdots \arrow["b", shift left=1, r] & C^{-1} \arrow["B", shift left=1, l] \arrow["b", shift left=1, r] & C^{0} \arrow["B", shift left=1, l] \arrow["b", shift left=1, r] & C^{1} \arrow["B", shift left=1, l] \arrow["b", shift left=1, r] & \cdots \arrow["B", shift left=1, l]
\end{tikzcd}
\]
such that the following conditions hold:
\[
b^{2}=0,\quad B^{2}=0,\quad bB+Bb=0.
\]
\end{definition}

\begin{remark}
A mixed complex is both a chain complex and a cochain complex in a compatible way. Equivalently, it is a dg module over the dg algebra $\Lambda=k[\epsilon]/(\epsilon)^{2}$, with $|\epsilon|=-1$ and $d=0$.
\end{remark}

\begin{example}\cite{Loday98}
If $A$ is an algebra, then we have the Connes-Quillen cyclic bicomplex:
\[
\begin{tikzcd}[column sep=10mm, row sep=7mm]
{} & {} & {} & {} & {} \\
A^{\otimes3} & A^{\otimes3} & A^{\otimes3} & A^{\otimes3} & \cdots \\
A^{\otimes2} & A^{\otimes2} & A^{\otimes2} & A^{\otimes2} & \cdots \\
A & A & A & A & \cdots
\arrow["0"', from=4-2, to=4-1]
\arrow["1"', from=4-3, to=4-2]
\arrow["d", from=3-1, to=4-1]
\arrow["{b'}", from=3-2, to=4-2]
\arrow["d", from=3-3, to=4-3]
\arrow["{1-t}"', from=3-2, to=3-1]
\arrow["{1+t}"', from=3-3, to=3-2]
\arrow["d", from=2-1, to=3-1]
\arrow["{b'}", from=2-2, to=3-2]
\arrow["d", from=2-3, to=3-3]
\arrow["0"', from=4-4, to=4-3]
\arrow["{1-t}"', from=3-4, to=3-3]
\arrow["{1-t}"', from=2-4, to=2-3]
\arrow["{1+t+t^{2}}"', from=2-3, to=2-2]
\arrow["{1-t}"', from=2-2, to=2-1]
\arrow["d", from=1-1, to=2-1]
\arrow["{b'}", from=1-2, to=2-2]
\arrow["d", from=1-3, to=2-3]
\arrow["1"', from=4-5, to=4-4]
\arrow["{1+t}"', from=3-5, to=3-4]
\arrow["{1+t+t^2}"', from=2-5, to=2-4]
\arrow["{b'}", from=1-4, to=2-4]
\arrow["{b'}", from=2-4, to=3-4]
\arrow["{b'}", from=3-4, to=4-4]
\end{tikzcd}
\]
Here $d$ is the differential of the Hochschild complex, $b'$ is the differential of the bar complex, and the cyclic group $\Z/(n+1)\Z$ action on $A^{\otimes n+1}$ is given by the generator $t=t_{n}$:
\[
t_{n}(a_{0}\otimes\ldots\otimes a_{n})=(-1)^{n}a_{n}\otimes a_{0}\otimes\ldots\otimes a_{n-1}.
\]
The homology of the total complex is the cyclic homology of $A$. Let $MA$ denote the total complex (cone of $(1-t)$) over the subcomplex formed by the first two columns.  
$b$ is the mapping cone differential $\left(\begin{smallmatrix} d & 1-t \\ 0 & -b'\end{smallmatrix}\right)$ and $B$ is $\left(\begin{smallmatrix} 0 &0 \\ N & 0\end{smallmatrix}\right): MA\to MA$ with $N=1+t+\cdots+t^{n}$, the homogeneous map of (cohomology) degree $-1$.
We have $H^n(MA)\cong HH_{-n}(A)$ for all $n\in\Z$.
\end{example}

\section{Singular Hochschild Homology}
\label{s:main}

There is a useful fact about Frobenius (symmetric) algebras. 
\begin{proposition}\cite[Corollary 3.3]{BerghJorgensen13} 
If $A$ is a Frobenius algebra, then the enveloping algebra $A^e$ is also Frobenius. Moreover, if $A$ is symmetric, then $A^e$ is symmetric.
\end{proposition}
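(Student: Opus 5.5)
The statement to prove is the cited Proposition (\cite[Corollary 3.3]{BerghJorgensen13}): if $A$ is Frobenius then $A^e=A\ten A^{op}$ is Frobenius, and if $A$ is symmetric then so is $A^e$. Here $A$ is a finite-dimensional algebra over a field $k$. We use the characterisation of Frobenius algebras by a linear form $\lambda\colon A\to k$ whose kernel contains no nonzero left (equivalently right) ideal. Equivalently, the bilinear form $(a,b)\mapsto\lambda(ab)$ is nondegenerate. The algebra is symmetric when moreover $\lambda(ab)=\lambda(ba)$ for all $a,b$.

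First, $A^{op}$ is Frobenius with the same form $\lambda$. Indeed, $\lambda(a\cdot_{op}b)=\lambda(ba)$, and this form is nondegenerate because the pairing is simply transposed. If $\lambda$ is symmetric for $A$, it is symmetric for $A^{op}$ as well.

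Next, for two Frobenius algebras $B,C$ with forms $\lambda_B,\lambda_C$, define
\[
\lambda=\lambda_B\ten\lambda_C\colon B\ten C\to k.
\]
The associated bilinear form on $B\ten C$ is
\[
(b\ten c,\,b'\ten c')\mapsto \lambda_B(bb')\,\lambda_C(cc'),
\]
which is the tensor product of the two bilinear forms. To show it is nondegenerate, choose bases $\{b_i\}$ of $B$ with dual basis $\{b_i^\vee\}$ satisfying $\lambda_B(b_i b_j^\vee)=\delta_{ij}$, and similarly $\{c_r\},\{c_r^\vee\}$ for $C$. Then $\{b_i\ten c_r\}$ and $\{b_j^\vee\ten c_s^\vee\}$ are bases of $B\ten C$ with
\[
\lambda\bigl((b_i\ten c_r)(b_j^\vee\ten c_s^\vee)\bigr)=\delta_{ij}\delta_{rs}.
\]
So the Gram matrix is the identity and the form is nondegenerate. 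Equivalently, the Gram matrix of the tensor form is the Kronecker product of two invertible matrices, hence invertible. This shows $B\ten C$ is Frobenius, and applying it to $B=A$, $C=A^{op}$ gives that $A^e$ is Frobenius.

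For the symmetric case, compute
\[
\lambda\bigl((b\ten c)(b'\ten c')\bigr)=\lambda_B(bb')\lambda_C(cc')=\lambda_B(b'b)\lambda_C(c'c)=\lambda\bigl((b'\ten c')(b\ten c)\bigr).
\]
Bilinearity extends this identity to all elements, so $A^e$ is symmetric. The only point needing care is nondegeneracy of the tensor form. Finite-dimensionality over a field is essential there, and the dual-basis argument handles it directly.
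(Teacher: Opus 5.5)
Your argument is correct, but it takes a different route from the paper.

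\textbf{Your route.} You work with Frobenius forms. You check that $\lambda$ remains a Frobenius form on $A^{op}$. You then show that $\lambda_B\ten\lambda_C$ is a Frobenius form on $B\ten C$, because dual bases tensor to dual bases. Nondegeneracy and symmetry are then verified by direct computation on pure tensors.

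\textbf{The paper's route.} The paper uses the module-theoretic characterisation through the Nakayama automorphism. It starts from the bimodule isomorphism $DA\cong A_\nu$ and the dualities $D(P)\cong (P^*)_\nu$ and $D(Q)\cong {}_{\nu^{-1}}(Q^*)$ for projective left and right modules. From these it computes $D(A^e)\cong {}_{\nu^{-1}}A\ten A_\nu$, so $D(A^e)$ is a twist of $A^e$. The symmetric case is read off from $\nu=\mathrm{id}$.

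\textbf{Comparison.} Your version is more elementary and self-contained. It avoids the bookkeeping of the four commuting $A$-actions on $D(A^e)$, which the paper treats only briefly. It also proves the more general fact that any tensor product of Frobenius (resp.\ symmetric) algebras is Frobenius (resp.\ symmetric).

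The paper's version has the advantage of producing the Nakayama automorphism $\nu\ten\nu^{-1}$ of $A^e$ and the duality $D(P)\cong(P^*)_\nu$. These are exactly what is used afterwards to build the complete resolution from ${}_{\nu^{-1}}P_*^{\vee}$. You could recover this from your setup as well: $\lambda(ab)=\lambda(b\,\nu(a))$ on $A$ gives $\lambda(a\cdot_{op}b)=\lambda(b\cdot_{op}\nu^{-1}(a))$ on $A^{op}$, so $\lambda\ten\lambda$ has Nakayama automorphism $\nu\ten\nu^{-1}$.
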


\begin{proof}
Since $A$ is a Frobenius algebra, let $\nu: A \to A$ be its Nakayama automorphism, so that we have the following bimodule isomorphisms:
\[
\begin{tikzcd}
D(_{\nu}A) \arrow[r, "\sim"] & A_\nu \arrow[r, "\nu^{-1}"] & {}_{\nu^{-1}}A.
\end{tikzcd}
\]
Here, the bimodule structures are given by:
\begin{itemize}
\item For $D(A) = \mathrm{Hom}_k(A,k)$: $(a \cdot f \cdot b)(x) = f(bxa)$ for $f \in D(A)$, $a,b,x \in A$.
\item For $A_\nu$: $a \cdot x \cdot b = ax\nu(b)$.
\item For ${}_{\nu^{-1}}A$: $a \cdot x \cdot b = \nu^{-1}(a)xb$.
\end{itemize}

For any idempotent $e \in A$, we have:
\[
D(_{\nu}Ae) \cong eD(_{\nu}A) \cong eA_\nu, \quad D(eA_{\nu}) \cong D(A_{\nu})e \cong {}_{\nu^{-1}}Ae.
\]

More generally, for a projective left $A$-module $P$, define $P^*= \mathrm{Hom}_A(P,A)$ (a right $A$-module). Then:
\[
D(P) \cong (P^*)_\nu,
\]
where the right action on $(P^*)_\nu$ is twisted by $\nu$.

For a projective right $A$-module $Q$, define $Q^* = \mathrm{Hom}_A(Q,A)$ (a left $A$-module). Then:
\[
D(Q) \cong {}_{\nu^{-1}}(Q^*)
\]
where the left action on ${}_{\nu^{-1}}(Q^*)$ is twisted by $\nu^{-1}$.

There are the following isomorphisms:
\[
D(P \otimes Q) \cong D(Q) \otimes D(P) \cong {}_{\nu^{-1}}(Q^*) \otimes (P^*)_\nu.
\]

In particular, take $P = {}_A A$ and $Q = A_A$. Then $P \otimes Q \cong A^e$, and we have:
\[
P^* = \mathrm{Hom}_A({}_A A, A) \cong A_A, \quad Q^* = \mathrm{Hom}_A(A_A, A) \cong {}_A A.
\]
Therefore:
\[
D(A^e) \cong {}_{\nu^{-1}}A \otimes A_\nu.
\]
So $A^e$ is a Frobenius algebra with $\nu\otimes\nu^{-1}$ as its Nakayama automorphism. Note that $A$ is symmetric if and only if its Nakayama automorphism is the identity. Hence, if $A$ is symmetric, then $A^e$ is symmetric.
\end{proof}

Our goal is to compare singular Hochschild homology $HH^{sg}_{*}(A)$ and $HH_{*}(\cs)$ for a finite dimensional algebra. Let $A$ be a Frobenius algebra, so $A^e$ is also Frobenius. To compute $HH^{sg}_{*}(A)$, we choose a projective resolution $P_*$ of $A$ over $A^{e}$:
\[
\xymatrix{
\cdots\ar[r] & P_{2} \ar[r] & P_{1} \ar[r] & P_{0} \ar[r] & A\ar[r] & 0}.
\] 
Consider its dual complex:
\[
\begin{tikzcd}
	0 & DA & {DP_{0}} & {DP_{1}} & \cdots \\
	0 & {_{\nu^{-1}}A_{\nu}} & {_{\nu^{-1}}P^{\vee}_{0\nu}} & {_{\nu^{-1}}P^{\vee}_{1\nu}} & \cdots
	\arrow[from=1-1, to=1-2]
	\arrow[from=1-2, to=1-3]
	\arrow[from=1-3, to=1-4]
	\arrow[from=1-4, to=1-5]
	\arrow[from=2-1, to=2-2]
	\arrow[from=2-2, to=2-3]
	\arrow[from=2-3, to=2-4]
	\arrow[from=2-4, to=2-5]
	\arrow["\sim", from=1-2, to=2-2]
	\arrow["\sim", from=1-3, to=2-3]
	\arrow["\sim", from=1-4, to=2-4],
\end{tikzcd}
\]
where $P_*^{\vee}=\Hom_{A^{e}}(P_*, A^{e})$. Twist by $\nu^{-1}$ on the right:
\[
\xymatrix{
0 \ar[r]&_{\nu^{-1}}A \ar[r] & _{\nu^{-1}}P^{\vee}_{0} \ar[r] & _{\nu^{-1}}P^{\vee}_{1} \ar[r] & _{\nu^{-1}}P^{\vee}_{2} \ar[r] & \cdots}.
\] 

Whence there is a complete resolution (denoted by $CR$):
\[
\begin{tikzcd}
	\cdots & {P_{1}} & {P_{0}} && {_{\nu^{-1}}P^{\vee}_{0}} & {_{\nu^{-1}}P^{\vee}_{1}} & \cdots \\
	&&& A
	\arrow[from=1-2, to=1-3]
	\arrow[from=1-3, to=1-5]
	\arrow[from=1-3, to=2-4]
	\arrow[from=2-4, to=1-5]
	\arrow[from=1-1, to=1-2]
	\arrow[from=1-5, to=1-6]
	\arrow[from=1-6, to=1-7]
\end{tikzcd}
\]
In this complex, the term ${}_{\nu^{-1}}P^{\vee}_{0}$ sits in degree $0$; consequently $P_0$ is in degree $1$, $P_1$ in degree $2$, and in general $P_n$ is in degree $n+1$. Thus for $p\geq 2$ we have $CR_p = P_{p-1}$, and the truncation $(A\otimes_{A^e}CR)_{\ge 2}$ is isomorphic to $(A\otimes_{A^e}P_*)_{\ge 1}$ with a degree shift by one. This yields an unbounded chain complex with $_{\nu^{-1}}P^{\vee}_{0}$ in degree $0$. We are interested in $HH^{sg}_{p}(A):=H_{p}(A\otimes_{A^{e}}CR)$. The complex $A\otimes_{A^e}CR$ is depicted as:
\[
\begin{tikzcd}
	\cdots & {A\otimes_{A^{e}} P_{1}} & {A\otimes_{A^{e}} P_{0}} & {A\otimes_{A^{e}} {_{\nu^{-1}}}P^{\vee}_{0}} & {A\otimes_{A^{e}} {_{\nu^{-1}}}P^{\vee}_{1}} & \cdots
	\arrow[from=1-2, to=1-3]
	\arrow[from=1-3, to=1-4]
	\arrow[from=1-1, to=1-2]
	\arrow[from=1-4, to=1-5]
	\arrow[from=1-5, to=1-6]
\end{tikzcd}
\]
Now we compare $HH^{sg}_*(A)$ with $HH_*(\cs)$. Recall from \cite{WAK} (see also Section~\ref{basic}) that there is a canonical triangle in the mixed derived category
\[
\xymatrix{
MA \ar[r] & DMA \ar[r] & M(\cs) \ar[r] & \Sigma MA,}
\]
which yields a long exact sequence relating $HH_*(A)$, its dual, and $HH_*(\cs)$. In particular, for all $p\notin\{0,1\}$ the maps in this sequence give isomorphisms
\[
HH_{p-1}(A) \;\cong\; HH_{p}(\cs) \qquad (\text{for } p\ge 2),
\]
\[
D HH_{-p}(A) \;\cong\; HH_{p}(\cs) \qquad (\text{for } p\le -1).
\]

\textbf{Case $p\ge 2$.} Using the degree-shift observation above we obtain:
\[
\begin{tikzcd}
HH^{sg}_{p}(A) \ar[r,"\sim"] & H_{p}(A\otimes_{A^{e}} CR) \ar[r,"\sim"] & H_{p-1}(A\otimes_{A^{e}} P_{*}) \ar[d,equal] \\
& & HH_{p-1}(A) \ar[r,"\sim"] & HH_{p}(\cs)
\end{tikzcd}
\]
The last isomorphism follows from the aforementioned triangle.

\textbf{Case $p\le -1$.} Here $CR_p = {}_{\nu^{-1}}P^{\vee}_{p}$, and we compute:
\begin{align*}
A\otimes_{A^{e}}CR_{p}
&\cong A\otimes_{A^{e}} {_{\nu^{-1}}}P^{\vee}_{p} \\
&\cong A_{\nu}\otimes_{A^{e}}P^{\vee}_{p} \\
&\cong \Hom_{A^{e}}(P_{p}, A_{\nu}) \\
&\cong \Hom_{A^{e}}(P_{p}, DA).
\end{align*}
Passing to homology and using that $P_*$ is a projective resolution, we obtain:
\begin{align*}
HH^{sg}_{p}(A)
&\cong H_{p}\mathbb{R}\!\Hom_{A^{e}}(A, DA) \\
&\cong H_{p}\,D(A\otimes^{\mathbb{L}}_{A^{e}} A) \\
&\cong D\,HH_{-p}(A,A) \\
&\cong HH_{p}(\cs),
\end{align*}
where the final isomorphism comes again from the mixed triangle.

Hence, for all $p\notin\{0,1\}$ we have proved the isomorphism
\[
HH^{sg}_{p}(A) \;\cong\; HH_{p}(\cs).
\]
What happens for $p=0$ and $p=1$? The long exact sequence from the triangle $MA\to DMA\to M(\cs)\to\Sigma MA$ yields the exact sequence
\[
\xymatrix{
0\ar[r] &H_{1}M(\cs) \ar[r] &H_{1}\Sigma MA\ar[r] &H_{0}DMA \ar[r] & H_{0}M(\cs) \ar[r] & 0.}
\]
Equivalently,
\[
\xymatrix{
0\ar[r] &HH_{1}(\cs) \ar[r] &HH_{0}(A)\ar[r] &DHH_{0}(A) \ar[r] & HH_{0}(\cs) \ar[r] & 0.}
\]
This completes the comparison at the remaining two degrees.

There exists a complex of $A$-bimodules which connects the Hochschild complex and its $k$-dual via a $k$-linear map $\tau$:  
\[
\begin{tikzcd}[column sep=6mm]
	\cdots & {A\ten A} & {A} & {} & {DA} & {D(A\ten A)} & \cdots \\
	&&& {A/[A,A]\to D(A/[A,A])} 
	&&& 
	\arrow["d", from=1-2, to=1-3]
	\arrow["Dd", from=1-5, to=1-6]
	\arrow[from=1-1, to=1-2]
	\arrow[from=1-6, to=1-7]
	\arrow["\tau", from=1-3, to=1-5]
	\arrow[from=1-3, to=2-4]
	\arrow[from=2-4, to=1-5]
\end{tikzcd}
\]
where $d$ is the commutator ($A\otimes A\to A$, $a\otimes b\mapsto ab-ba$) and the degree of $A$ is $-1$. For any $a,b\in A$,
\[
(\tau(a))(b):=Tr(\lambda_{a}\circ\rho_{b}: A\to A).
\]
Notice that $\tau d=0$ and $Dd \tau=0$, because $Tr(\lambda_{a}\circ\rho_{b})$ only depends on $[a]$, $[b]$. Indeed:
\begin{align*}
Tr(\lambda_{[a, a']}\circ\rho_{b})
&= Tr((\lambda_{a}\lambda_{a'}-\lambda_{a'}\lambda_{a})\circ\rho_{b})\\
&= Tr(\lambda_{a}\lambda_{a'}\rho_{b}-\lambda_{a'}\rho_{b}\lambda_{a})\\
&= 0.
\end{align*}
Note that $\rho_{x}\lambda_{y}=\lambda_{y}\rho_{x}$ for all $x, y\in A$. Similarly, we have $Tr(\lambda_{a}\circ\rho_{[b, b']})=0$ for all $a, b, b'\in A$. There is the following theorem:

\begin{theorem}\cite{WAK} $HH_*(\cs)$ is canonically isomorphic (in $\cd k$) to the homology of the double Hochschild complex of $A$:
\[
\xymatrix{
\cdots\ar[r] & A\ten A \ar[r]^{d} & A \ar[r]^{\tau}& DA \ar[r] & D(A\ten A)\ar[r] & \cdots}.
\] 
where $(\tau(a))(b)=Tr(\lambda_{a}\circ\rho_{b})$, with the left multiplication $\lambda_{a}$ and the right multiplication $\rho_{b}$.
\end{theorem}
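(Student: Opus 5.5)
This theorem is cited from \cite{WAK}, so the plan is to rederive it from the canonical triangle $MA\to DMA\to M(\cs)\to\Sigma MA$ recalled above. Since $A$ is finite dimensional, the singularity category is the dg quotient of $\cd^b_{dg}(\mod A)$ by $\per_{dg}(A)$. Keller's localization theorem then gives a triangle of mixed complexes $C(\per A)\to C(\cd^b(\mod A))\to C(\cs)\to\Sigma C(\per A)$. First, $C(\per A)\simeq MA$ by derived Morita invariance. Second, $\cd^b(\mod A)\simeq\per(DA)$ as a dg category, since $\cd^b(\mod A)$ is generated by the injective cogenerator $DA$ and its endomorphism dg algebra is $\mathbb{R}\mathrm{End}_A(DA)\simeq A$ (up to the appropriate op convention). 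So after identifying Hochschild complexes one obtains $C(\cd^b(\mod A))\simeq DMA$, suitably shifted, via the duality $\Hom_k(-,k)$ applied to $A\otimes^{\mathbb L}_{A^e}A$. It is cleaner to phrase this as Keller's result: for a finite-dimensional algebra, $C(\cd^b(\mod A))\simeq \mathbb{R}\Hom_{A^e}(A, DA)\cong D(A\otimes^{\mathbb L}_{A^e}A)$, i.e. $DC(A)$, with the mixed structure dualized.

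Next I would model $M(\cs)$ as the cone of the composite map $MA\to DMA$ and compute this map explicitly on the level of complexes. The underlying complex of the cone is then the totalization of the Hochschild chain complex $\cdots\to A\ten A\to A$ glued to its $k$-dual $DA\to D(A\ten A)\to\cdots$, with the gluing concentrated where $A$ (degree $-1$) meets $DA$. Higher components of the gluing map can be ignored once we know the map in the derived category is determined by its effect on $HH_0$: the map factors through $HH_0(A)\to DHH_0(A)$, and a chain map between these complexes in degree $0$ is all that is needed to represent it up to homotopy. More carefully, I would use that $C(A)$ is a projective-resolution-type complex, so maps $C(A)\to DC(A)$ in $\cd k$ are determined by pairings, i.e. by elements of $D(HH_*(A)\otimes HH_*(A))$ in the relevant degree, which here is a bilinear form on $HH_0(A)$.

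The main obstacle is identifying this bilinear form as $\langle a,b\rangle = Tr(\lambda_a\rho_b)$. The map arises from the functor $\per A\hookrightarrow \cd^b(\mod A)$ followed by the identification with $DC(A)$. On $C_0$, a closed loop $a\in\mathrm{End}(A_A)$ maps to the pairing $b\mapsto$ the trace of $a$ acting on $A$ viewed through the bimodule structure with $b$. Concretely, the Morita/Koszul identification sends the class of an endomorphism $f$ of a perfect complex to the Hattori–Stallings type trace. Under the duality, the pairing of $a$ with $b\in HH_0(A)$ becomes the trace of $a\otimes b$ acting on the $A$-bimodule $A$, namely $Tr(\lambda_a\circ\rho_b)$. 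I would verify this by tracking the unit and counit of the duality $A\otimes^{\mathbb L}_{A^e}A\to D\mathbb{R}\Hom_{A^e}(A,DA)$ on generators $e_i$, checking the pairing on $e_iAe_j$. The invariance under commutators shown above ($\tau d=0$, $Dd\,\tau=0$) confirms that this defines a chain map, and the resulting cone is exactly the displayed double Hochschild complex.
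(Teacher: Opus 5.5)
The paper does not prove this statement. It is quoted from \cite{WAK}, and the only related ingredient the paper recalls is the triangle $MA\to DMA\to M(\cs)\to\Sigma MA$, with the map $MA\to DMA$ left implicit. Measured against what a proof actually requires, your proposal has two genuine gaps.

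The first gap is the identification of the middle term. Your reason for $\cd^b(\mod A)\simeq\per(DA)$ is false. The thick subcategory of $\cd^b(\mod A)$ generated by $DA$ is $K^b(\mathrm{inj}\,A)$, which the Nakayama functor identifies with $\per A$. It is all of $\cd^b(\mod A)$ only when $A$ has finite global dimension, i.e.\ exactly when $\cs=0$. Combined with your (correct) computation $\mathbb{R}\mathrm{End}_A(DA)\simeq A$, it would give $C(\cd^b_{dg}(\mod A))\simeq C(A)$ rather than $DC(A)$. So it cannot be what produces the triangle. The natural generator of $\cd^b(\mod A)$ is $A/\rad A$, whose derived endomorphism algebra is the Koszul dual, not $A$. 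The quasi-isomorphism $C(\cd^b_{dg}(\mod A))\simeq DC(A)$ is the substantial input of \cite{WAK}, and ``phrasing it as Keller's result'' simply assumes it.

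The second gap is the identification of the map, which is the real content of the theorem once the triangle is granted. Your degree argument here is correct and worth keeping. $C(A)$ lives in non-positive and $DC(A)$ in non-negative cohomological degrees. So over a field a morphism $C(A)\to DC(A)$ in $\cd k$ is just a chain map concentrated in degree $0$, i.e.\ a bilinear form on $HH_0(A)$, and no homotopies can intervene.

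But you never determine \emph{which} form; the sentence about ``the trace of $a$ acting on $A$ through the bimodule structure with $b$'' states the conclusion. What is missing is an explicit description of the degree-$0$ part of the identification $HH_0(\cd^b_{dg}(\mod A))\cong DHH_0(A)$ compatible with the triangle. For instance, one needs that it sends the class of an endomorphism $f$ of a finite-dimensional module $M$ to the functional $b\mapsto Tr(f\circ\rho^M_b)$. Applied to $M=A_A$ and $f=\lambda_a$, this gives $\tau$ for all $a,b$ at once. The Hattori--Stallings trace you mention only describes the source side $HH_0(\per A)\cong HH_0(A)$.

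Your proposed check on the idempotents $e_i$ (recovering $\dim e_iAe_j$) only pins the form down on the span of the classes $[e_i]$. That span is in general a proper subspace of $HH_0(A)$. For $A=k[x]/(x^2)$ one has $HH_0(A)=A$, and one must still show that the form vanishes when either argument is $[x]$; no computation with idempotents can deliver this.
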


\begin{remark}The theorem can be generalized to suitable proper dg algebras.
\end{remark}

We denote the double Hochschild complex by $M_*(\cs)$. In general, does $DM_*(\cs)\cong\Sigma ^{-1} M_*(\cs)$ hold for an arbitrary finite-dimensional algebra?
The answer is no. We need to find the conditions for this isomorphism to hold. 

\begin{proposition}\label{complex}
 The double Hochschild complex $M_*(\cs)$ is isomorphic to its $k$-dual shift complex $\Sigma D(M_*(\cs))$ if and only if    
$Tr(\lambda_{a}\circ\rho_{b})=Tr(\lambda_{b}\circ\rho_{a})$ for all $a, b\in A$, where $\lambda_{a}\circ\rho_{b}: x\mapsto axb$ and $\lambda_{b}\circ\rho_{a}: x\mapsto bxa$.
\end{proposition}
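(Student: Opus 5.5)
The approach is to write down the dual complex explicitly and compare it with $M_*(\cs)$ term by term. Write $M_*(\cs)$ as the complex
\[
\cdots \to A^{\ten 2}\xrightarrow{d} A \xrightarrow{\tau} DA \xrightarrow{Dd} D(A^{\ten 2})\to\cdots
\]
with $A$ in degree $-1$ and $DA$ in degree $0$. Dualizing termwise gives $D M_*(\cs)$:
\[
\cdots\to D^2(A^{\ten 2})\xrightarrow{D^2 d} D^2A \xrightarrow{D\tau} DA\xrightarrow{Dd} D(A^{\ten 2})\to\cdots .
\]
Since $A$ is finite-dimensional, the canonical evaluation map $A\to D^2A$ identifies this with
\[
\cdots\to A^{\ten 2}\xrightarrow{d} A\xrightarrow{\tau^{*}} DA\xrightarrow{Dd}\cdots .
\]
After the shift $\Sigma$ the terms sit in exactly the same degrees as in $M_*(\cs)$; I will check the degree bookkeeping carefully, including any sign the shift introduces. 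Here $\tau^{*}$ is the transpose of $\tau$, namely $(\tau^*(a))(b)=(\tau(b))(a)=Tr(\lambda_b\rho_a)$. So $\Sigma DM_*(\cs)$ is the same as $M_*(\cs)$, with the same outer differentials, except that the middle map $\tau$ is replaced by its transpose $\tau^{*}$.

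\textbf{Sufficiency.} If $Tr(\lambda_a\rho_b)=Tr(\lambda_b\rho_a)$ for all $a,b$, then $\tau=\tau^*$. The identity in each degree, adjusted by signs if the shift requires it, is then an isomorphism of complexes $M_*(\cs)\cong \Sigma DM_*(\cs)$. This direction is formal.

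\textbf{Necessity.} This is the main obstacle, because one must decide what "isomorphic" means. If it means the canonical termwise identification just described is a chain map, then it forces $\tau=\tau^*$ immediately. For an arbitrary isomorphism in $\cd k$ the statement would say less, since quasi-isomorphism only sees the kernel and cokernel of $\tau$. Those are controlled by the rank of the bilinear form $\langle a,b\rangle$, which equals the rank of its transpose. So for a general quasi-isomorphism the asserted equivalence would be false.

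I therefore read the proposition as concerning the natural identification of complexes via $A\cong D^2A$, compatible with the given structure. Under this reading, I require the identification to be a chain map in degree $-1\to 0$. The commutativity of that square reads $\tau = \tau^{*}$, which evaluated at $b$ gives exactly $Tr(\lambda_a\rho_b)=Tr(\lambda_b\rho_a)$.

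I will state this interpretation explicitly before the proof. The remaining check is that the isomorphism $A\cong D^2A$ commutes with $d$ and $Dd$ by naturality, and that the dualization of the composite $A\to A/[A,A]\to D(A/[A,A])\to DA$ yields the transpose form.
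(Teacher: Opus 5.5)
Your proposal is correct and follows the paper's proof essentially verbatim. The paper also compares $M_*(\cs)$ with $\Sigma DM_*(\cs)$ via the canonical isomorphism $\phi\colon A\to D(DA)$ and identities on the other terms. It observes that only the square involving $\tau$ and $D\tau$ is nontrivial, and computes $D\tau(\phi(a))(b)=\tau(b)(a)=Tr(\lambda_b\circ\rho_a)$.

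Your caveat that ``isomorphic'' has to mean this canonical identification is well founded. Since $\operatorname{rank}\tau=\operatorname{rank}\tau^{*}$, the two complexes of finite-dimensional vector spaces are always abstractly isomorphic, so the ``only if'' direction would fail under that reading. The paper's proof tacitly uses the same canonical reading you adopt.
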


\begin{proof}
Suppose that there is the following commutative diagram:
\[
\xymatrix@C=1.2cm{
\cdots \ar[r] & A\ten A \ar[r]^{d} \ar[d] & A \ar[r]^{\tau} \ar[d]^{\phi} & DA \ar[r] \ar[d]^{=} & D(A\ten A) \ar[r] \ar[d]^{=} & \cdots \\
\cdots \ar[r] & D(D(A\ten A)) \ar[r]^{D(Dd)} & D(DA) \ar[r]^{D\tau} & DA \ar[r] & D(A\ten A) \ar[r] & \cdots,
}
\]
where $\phi$ is the canonical isomorphism from a finite dimensional algebra $A$ to its double $k$-dual $D(DA)$.  We only need to consider the commutativity of the second square (the other squares commute naturally). We have:
\[
(\tau(a))(b)=Tr(\lambda_a\circ\rho_b),\qquad
D\tau(\phi(a))(b)=\phi(a)(\tau(b))=\tau(b)(a)=Tr(\lambda_b\circ\rho_a).
\]
The diagram commutes if and only if $Tr(\lambda_a\circ\rho_b)=Tr(\lambda_b\circ\rho_a)$ for all $a,b\in A$.
\end{proof}

We define a bilinear form $\langle a, b\rangle:=Tr(\lambda_{a}\circ\rho_{b})$. Hence, the isomorphism $DM_*(\cs)\cong\Sigma ^{-1} M_*(\cs)$ holds if and only if the $k$-bilinear form is symmetric ($\langle a, b\rangle=\langle b, a\rangle$ for all $a, b\in A$). Let $\{e_1,\dots,e_n\}$ be a basis of $A$. For finite dimensional algebras, we calculate the matrix $(p_{ij})\in M_n(k)$ of $\tau$, defined by $p_{ij}:=\langle e_i, e_j\rangle$ for $i,j\in \{1,\dots,n\}$.  

\begin{example}
Let $Q: 1\to 2$ be the $A_2$ quiver, then its path algebra $kQ$ is isomorphic to the matrix algebra $\left( \begin{matrix} k &0 \\ k& k\end{matrix}  \right)$. Suppose that its basis is $\{E_{11},E_{21},E_{22}\}$. The products of these basis elements are:
\begin{align*}
E_{11}E_{11}&=E_{11} &E_{11}E_{21}&=0 &E_{11}E_{22}&=0,\\
E_{21}E_{11}&=E_{21} &E_{21}E_{21}&=0 &E_{21}E_{22}&=0,\\
E_{22}E_{11}&=0 &E_{22}E_{21}&=E_{21} &E_{22}E_{22}&=E_{22}.\\
\end{align*}
The matrices of left and right multiplication are:
\begin{align*}
\lambda_{E_{11}}&=\left( \begin{matrix} 1 &0&0 \\ 0& 0& 0\\0& 0& 0\end{matrix}  \right) 
&\lambda_{E_{21}}&=\left( \begin{matrix} 0&0&0 \\ 1&0& 0\\0& 0& 0\end{matrix}  \right)
&\lambda_{E_{22}}&=\left( \begin{matrix} 0&0&0 \\ 0&1& 0\\0& 0& 1\end{matrix}  \right),\\
\rho_{E_{11}}&=\left( \begin{matrix} 1&0&0 \\ 0&1&0\\0&0& 0\end{matrix}  \right)
&\rho_{E_{21}}&=\left( \begin{matrix} 0&0&0 \\ 0&0&1\\0&0& 0\end{matrix}  \right)
&\rho_{E_{22}}&=\left( \begin{matrix} 0&0&0 \\ 0&0&0\\0&0&1\end{matrix}  \right).\\
\end{align*}
We compute $\langle a, b\rangle=Tr(\lambda_{a}\circ\rho_{b})$:
\begin{align*}
\langle E_{11}, E_{11}\rangle &=Tr \left( \begin{matrix} 1&0&0 \\ 0&0&0\\0&0&0\end{matrix}  \right)=1, &
\langle E_{11}, E_{21}\rangle &=Tr(0)=0,\\
\langle E_{11}, E_{22}\rangle &=Tr(0)=0, &
\langle E_{21}, E_{11}\rangle &=Tr(\lambda_{E_{21}})=0,\\
\langle E_{21}, E_{21}\rangle &=Tr(0)=0, &
\langle E_{21}, E_{22}\rangle &=Tr(\lambda_{E_{21}})=0,\\
\langle E_{22}, E_{11}\rangle &=Tr\left( \begin{matrix} 0&0&0 \\ 0&1&0\\0&0&0\end{matrix}  \right)=1, &
\langle E_{22}, E_{21}\rangle &=Tr(\rho_{E_{21}})=0,\\
\langle E_{22}, E_{22}\rangle &=Tr\left( \begin{matrix} 0&0&0 \\ 0&0&0\\0&0&1\end{matrix}  \right)=1.
\end{align*}
Hence, the matrix of $\langle\cdot,\cdot\rangle$ is $\left( \begin{matrix} 1&0&0 \\ 0&0&0\\1&0&1\end{matrix}  \right)$, so the form is not symmetric for $Q: 1\to 2$.
\end{example}

\begin{example}\label{3.6}
Let $Q$ be the quiver 
\[
\begin{tikzcd}[column sep=small]
	1 & 2
	\arrow["\alpha", shift left=2, from=1-1, to=1-2]
	\arrow["\beta", shift left=2, from=1-2, to=1-1]
\end{tikzcd}
\]
and $I$ be the ideal of $kQ$ generated by $\alpha\beta$ and $\beta\alpha$. The matrix of $\langle\cdot,\cdot\rangle$ is 
\[
\begin{pmatrix}
1&1&0&0 \\
1&1&0&0 \\
0&0&0&0 \\
0&0&0&0
\end{pmatrix}
\]
with respect to the basis $\{e_1, e_2, \alpha, \beta \}$. The non‑zero $2\times 2$ block in the upper left corner, $\left( \begin{matrix} 1&1 \\ 1&1\end{matrix} \right)$, is the Cartan matrix of $A$. 
One finds that $\Ker d=\Im \tau=\operatorname{span}\{\alpha, \beta\}$. Hence, $HH_0(\cs)=0$. Define a Frobenius trace $t: A\to k$ by 
\[
t(e_1)=0,\quad t(e_2)=0,\quad t(\alpha)=1,\quad t(\beta)=1,
\]
and the bilinear form $\phi(x,y)=t(xy)$.  One checks that $\phi$ is nondegenerate but not symmetric.

On the other hand, the Nakayama automorphism $\nu$ is determined by 
\[
\phi(a,b)=\phi(b,\nu(a))\quad\forall a,b\in A.
\]
By evaluating on the basis one finds 
\[
\nu(e_1)=e_2,\quad \nu(e_2)=e_1,\quad 
\nu(\alpha)=\beta,\quad \nu(\beta)=\alpha,
\]
so $\nu$ has order two. Consider two projective $A^e$-modules:
\[
P_0 = Ae_1\otimes e_1A\oplus Ae_2\otimes e_2A,\quad
P_1 = Ae_1\otimes e_2A\oplus Ae_2\otimes e_1A.
\]
Consider the complete resolution
\[
\xymatrix{
& \cdots \ar[r] & P_0 \ar[r]^{d_0} & P_1 \ar[r]^{d_1} & P_0 \ar[rr]^{d_0} \ar[dr]^{\mu} & & P_1 \ar[r]^{d_1} & P_0 \ar[r] & \cdots \\
& & & & & A \ar[ur] & & & & &
}
\]
where $P_1\cong {}_{\nu^{-1}}P^{\vee}_0$, $P_0\cong {}_{\nu^{-1}}P^{\vee}_1$ as $A^e$-modules, and the differentials are as follows:
\[
\begin{aligned}
d_0(e_1\otimes e_1) &= e_1\otimes\beta-\alpha\otimes e_1,\quad
d_0(e_2\otimes e_2) = e_2\otimes\alpha-\beta\otimes e_2,\\
d_1(e_1\otimes e_2) &= e_1\otimes\alpha-\alpha\otimes e_2,\quad
d_1(e_2\otimes e_1) = e_2\otimes\beta-\beta\otimes e_1.
\end{aligned}
\]
And $\mu: P_0 \to A$ is the multiplication map  
\[
\mu(ae_i\otimes e_i b) = a\,e_i b = a b \quad(a,b\in A).
\]

Since $A$ is a Frobenius algebra, 
there are isomorphisms $HH_{n}^{sg}(A)\cong HH_{n}(A)$ for every $n\geq 1$ \cite{BerghJorgensen13, EuSchedler09}.
Therefore, from the 2-periodic complete resolution we obtain 
\[
HH_{n}^{sg}(A)\cong H_n(P_\bullet\otimes_{A^e}A) \cong k,\quad n\in \Z.
\]
\end{example}

\begin{proposition}\label{Cartan}
Let $A=kQ/I$, where $Q$ is a finite quiver, $k$ a field, and $I$ is an admissible ideal of $kQ$. If we choose a basis of $A$ with the trivial paths $\{e_1,\dots,e_n\}$ as the first $n$ elements, followed by non-trivial paths, then the matrix of the bilinear form $\langle\cdot,\cdot\rangle$ with respect to this basis has its only non-zero entries in the upper-left $n \times n$ block, and that block is precisely the Cartan matrix $C$ of $A$.
\end{proposition}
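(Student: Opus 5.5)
The plan is to compute $\langle a,b\rangle=Tr(\lambda_a\circ\rho_b)$ directly on the chosen basis. The entries split into two kinds: those where one argument is a non-trivial path, which should vanish by a nilpotency argument, and those where both arguments are trivial paths, which should give a projection whose trace is a dimension.

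First, fix the basis. Since $I$ is admissible, the images of the non-trivial paths lie in the Jacobson radical $J=\rad A$, which is the ideal generated by the arrows. Also $J^N=0$ for some $N$, because $I$ contains all paths of length at least some $N$.

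\textbf{Vanishing entries.} Suppose $a\in J$ and $b\in A$ is arbitrary. Then $\lambda_a\rho_b(J^m)=aJ^mb\subseteq J^{m+1}$ for every $m\ge 0$, with $J^0=A$. So $\lambda_a\circ\rho_b$ strictly raises the radical filtration $A\supseteq J\supseteq J^2\supseteq\cdots\supseteq J^N=0$. It is therefore nilpotent and has trace $0$.

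The same argument works when $b\in J$, using $\lambda_a\rho_b(J^m)\subseteq J^{m+1}$. Hence every row and every column of the matrix indexed by a non-trivial path is zero. This is where admissibility is really used: we need the non-trivial paths to lie in a nilpotent ideal. If one prefers a basis made of linear combinations of paths rather than single paths, one only needs these basis elements to lie in $J$, which still holds.

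\textbf{The upper-left block.} For trivial paths $e_i,e_j$, the operators $\lambda_{e_i}$ and $\rho_{e_j}$ are commuting idempotent endomorphisms of $A$. Their composite $x\mapsto e_ixe_j$ is therefore an idempotent with image $e_iAe_j$. The trace of an idempotent over a field equals the dimension of its image, so
\[
\langle e_i,e_j\rangle=\dim_k e_iAe_j .
\]
Note that $e_iAe_j\cong\Hom_A(e_jA,e_iA)$, and $\dim_k e_iAe_j$ is precisely the $(i,j)$ (or $(j,i)$, depending on convention) entry of the Cartan matrix $C$. So the block equals $C$, possibly up to transposition, and we fix the Cartan convention so that it is exactly $C$. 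This matches Example~\ref{3.6}.

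The main obstacle is only the bookkeeping about what ``basis of paths'' means when $I$ is not monomial. The nilpotency argument avoids this, since it needs nothing beyond $a\in J$ or $b\in J$.
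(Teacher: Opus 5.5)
Your proof is correct and follows the paper's route. Entries involving a radical element vanish because $\lambda_a\circ\rho_b$ is nilpotent, and for trivial paths the map $x\mapsto e_ixe_j$ is an idempotent projection onto $e_iAe_j$ whose trace gives the Cartan entry. The only difference is that you get nilpotency from the filtration $aJ^mb\subseteq J^{m+1}$, while the paper uses $(\lambda_a\rho_b)^m=\lambda_{a^m}\rho_{b^m}$ with $a$ or $b$ nilpotent. Like the paper, you tacitly read the trace $\dim_k(e_iAe_j)\cdot 1_k$ as an integer; this is literally the Cartan entry only in characteristic $0$, and otherwise the block is $C$ reduced mod $\operatorname{char} k$.
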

\begin{proof}
For $a,b\in A$ define $\alpha_{a,b}:A\to A$ by $\alpha_{a,b}(x)=axb$. Then $\langle a,b\rangle = Tr(\alpha_{a,b})$. Therefore, $\alpha^{2}_{a, b}: x\mapsto aaxbb = \lambda_{a^{2}}x\rho_{b^{2}}$, i.e.\ $\alpha^{2}_{a, b}=\alpha_{a^{2}, b^{2}}$. By induction, $\alpha^{m}_{a, b}=\alpha_{a^{m}, b^{m}}$ for all $m\geq1$. If $a$ or $b$ is nilpotent, then $\alpha_{a, b}$ is nilpotent. So $\langle a, b\rangle=Tr(\alpha_{a, b})=0$. Hence, if $a\in \rad A$ or $b\in \rad A$, then $\langle a, b\rangle=0$. Thus the bilinear form descends to $\overline{\langle\cdot,\cdot\rangle}$ on $A/\rad A$. By the universal property of the quotient, we have the commutative diagram:
\[
\begin{tikzcd}
	{A\times A} && k \\
	{A/\rad A\times A/\rad A}
	\arrow["{\langle\cdot,\cdot\rangle}", shorten <=6pt, shorten >=6pt, from=1-1, to=1-3]
	\arrow[from=1-1, to=2-1]
	\arrow["{\overline{\langle\cdot,\cdot\rangle}}"', bend right=6pt, dashed, from=2-1, to=1-3],
\end{tikzcd}
\]
where $A/\rad A\cong kQ_{0}$. Let $a: i\to j$ be an arrow, then $a=e_{j}a e_{i}$ where $e_{i}$ (or $e_{j}$) is the trivial path at $i$ (or $j$). So $\alpha_{e_{j},e_{i}}(a)=a$. There is an equation: 
\[
\alpha_{e_{j},e_{i}}\Bigl(\sum_{p:\ \text{path}} c_{p}p\Bigr)=\sum c_{p}e_{j}pe_{i}=\sum c_{p}p,
\]
where the source of $p$ is $i$, the target is $j$, and $c_p\in k$. 

Hence, $\alpha_{e_{j},e_{i}}: A\to A$ is the projection onto $e_{j}Ae_{i}$ along $\bigoplus_{k\ne j, l\ne i}e_{k}Ae_{l}$. So,
\[
Tr(\alpha_{e_{j},e_{i}})=\dim e_{j}Ae_{i}=\dim \Hom(P_{j},P_{i}).
\]
Therefore, the matrix for $\overline{\langle\cdot,\cdot\rangle}$ is $(\dim \Hom(P_{j}, P_{i}))_{i,j}$. In our chosen basis, all non‑trivial paths belong to $\rad A$, so their pairings with any basis element vanish.
Consequently the matrix of $\langle\cdot,\cdot\rangle$ has the form
\[
\begin{pmatrix}
(\dim e_j A e_i)_{j,i=1}^n & 0 \\
0 & 0
\end{pmatrix},
\]
and the upper‑left block is exactly the Cartan matrix.
\end{proof}

\begin{remark}
$\langle\cdot,\cdot\rangle$ is symmetric if and only if the Cartan matrix of $A$ is symmetric.
\end{remark}

\begin{corollary}
If $Q$ is a finite acyclic quiver and $A=kQ$ is the path algebra, then $HH_0(\cs)=HH_{1}(\cs)=0$.
\end{corollary}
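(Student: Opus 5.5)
Since $Q$ is acyclic, $A=kQ$ is finite-dimensional and hereditary. I will compute $HH_0(\cs)$ and $HH_1(\cs)$ from the exact sequence
\[
0\to HH_1(\cs)\to HH_0(A)\xrightarrow{\ \bar\tau\ } DHH_0(A)\to HH_0(\cs)\to 0
\]
established above, where $\bar\tau$ is induced by $\tau$. Both groups vanish exactly when $\bar\tau$ is an isomorphism. (Alternatively: $kQ$ has finite global dimension, so $\sg(A)=0$ and $\cs$ is contractible; but I will argue directly through the sequence and Proposition~\ref{Cartan}.)

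\textbf{Step 1: describe $HH_0(A)$.} For acyclic $Q$, every nontrivial path $p$ from $i$ to $j$ with $i\neq j$ satisfies $p=e_jp-pe_j=[e_j,p]$, so it is a commutator. Hence $[A,A]\supseteq\rad A$. Conversely, $[A,A]\subseteq \rad A$, because $A/\rad A\cong kQ_0$ is commutative. Therefore $HH_0(A)=A/\rad A\cong kQ_0$, with basis the classes of $e_1,\dots,e_n$. Dually, $DHH_0(A)$ has basis the dual basis.

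\textbf{Step 2: compute $\bar\tau$.} By Proposition~\ref{Cartan}, the matrix of $\bar\tau$ in these bases is the matrix $(\langle e_i,e_j\rangle)$, which is the Cartan matrix $C=(\dim e_jAe_i)$ (up to transpose). Order the vertices so that every arrow goes from a smaller index to a larger one, which is possible since $Q$ is acyclic. Then $e_jAe_i=0$ whenever $j<i$, and $\dim e_iAe_i=1$ because the only path from $i$ to itself is $e_i$. So $C$ is triangular with ones on the diagonal, hence $\det C=1$ and $\bar\tau$ is invertible.

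\textbf{Step 3: conclude.} Since $\bar\tau$ is an isomorphism, exactness of the sequence gives $HH_1(\cs)=\ker\bar\tau=0$ and $HH_0(\cs)=\operatorname{coker}\bar\tau=0$.

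The main point to check is Step 1, namely that $[A,A]$ is exactly $\rad A$, so that the exact sequence really involves the $n\times n$ Cartan matrix. For acyclic quivers this is immediate from $p=[e_j,p]$. The unitriangularity in Step 2 is the key input, and it depends on the absence of oriented cycles.
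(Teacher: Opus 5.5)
Your proof is correct and follows essentially the same route as the paper: Proposition~\ref{Cartan}, together with $[A,A]=\rad A$, reduces everything to the map $\bar\tau\colon HH_0(A)\to DHH_0(A)$ in the four-term exact sequence. Your unitriangularity argument for the Cartan matrix makes explicit a point the paper leaves tacit, since its claim that $\Ker\tau$ is the arrow ideal requires the Cartan matrix to be invertible over $k$; you also correctly read off $HH_1(\cs)=\Ker\bar\tau$ and $HH_0(\cs)=\operatorname{coker}\bar\tau$.
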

\begin{proof}
By Proposition~\ref{Cartan}, the kernel of the map $\tau$ (induced by the bilinear form $\langle\cdot,\cdot\rangle$) is the ideal generated by the arrows $kQ_1$, which coincides with the commutator ideal of $A$. From the definition of $M_*(\cs)$ we therefore obtain $HH_0(\cs)=0$. 
The long exact sequence
\[
\xymatrix{
0\ar[r] & HH_{1}(\cs) \ar[r] & HH_{0}(A) \ar[r] & DHH_{0}(A) \ar[r] & HH_{0}(\cs) \ar[r] & 0,
}
\]
together with the isomorphism $HH_{0}(A)\cong DHH_{0}(A)$, forces $HH_{1}(\cs)=0$. 
\end{proof}

\begin{theorem}\label{main}
 Let $k$ be an algebraically closed field and $A$ a finite-dimensional basic $k$-algebra.        
 The Cartan matrix of $A$ is symmetric if and only if $DM_*(\cs)\cong\Sigma ^{-1} M_*(\cs)$.  
\end{theorem}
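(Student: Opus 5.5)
The proof combines Proposition~\ref{complex} with Proposition~\ref{Cartan}.

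First, since $k$ is algebraically closed and $A$ is basic and finite-dimensional, Gabriel's theorem gives an isomorphism $A\cong kQ/I$, with $Q$ a finite quiver and $I$ an admissible ideal. The statement $DM_*(\cs)\cong\Sigma^{-1}M_*(\cs)$ is equivalent to $M_*(\cs)\cong \Sigma D M_*(\cs)$: apply $\Sigma$ to both sides. Proposition~\ref{complex} says the latter holds exactly when the trace pairing $\langle a,b\rangle=Tr(\lambda_a\circ\rho_b)$ is symmetric. Both the pairing and the complex $M_*(\cs)$ are transported along algebra isomorphisms, so we may assume $A=kQ/I$.

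Next, choose a basis of $A$ whose first $n$ elements are the trivial paths $e_1,\dots,e_n$, followed by a basis of $\rad A$ consisting of residue classes of nontrivial paths. By Proposition~\ref{Cartan}, the Gram matrix of $\langle\cdot,\cdot\rangle$ in this basis is block diagonal. The upper-left block is $(\dim e_jAe_i)$, which is the Cartan matrix $C$ or its transpose. Every other entry is zero. A bilinear form is symmetric if and only if its Gram matrix in some basis is symmetric. Since $C$ is symmetric if and only if $C^{T}$ is, the pairing is symmetric if and only if $C$ is. This is the content of the Remark after Proposition~\ref{Cartan}. Combining this with the first step proves the theorem.

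The delicate point is the use of Proposition~\ref{complex} in the direction ``isomorphism $\Rightarrow$ symmetry''. As written, that proposition only tests the specific comparison map that is the identity on $DA$ and $D(A\ten A)$ and the canonical $\phi$ on $A$. An arbitrary isomorphism $M_*(\cs)\cong\Sigma DM_*(\cs)$ need not be of this form. So I would read the isomorphism in the theorem as the canonical duality map. I would make this explicit by stating that ``$\cong$'' means the canonical morphism built from the double-dual identification $\phi$ is an isomorphism of complexes. Under that reading, Proposition~\ref{complex} gives both implications immediately, and the remaining work is just the bookkeeping of the identification above.

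If instead one wanted the statement for abstract isomorphisms, I would first compare the homology in degrees $0$ and $1$. The exact sequence $0\to HH_1(\cs)\to HH_0(A)\to DHH_0(A)\to HH_0(\cs)\to 0$ relates these degrees, and the map $HH_0(A)\to DHH_0(A)$ in it is induced by $\tau$, i.e.\ by the restricted Gram matrix $C$. I would then try to detect an asymmetry of $C$ there. I expect this route to fail in general, since asymmetry need not change dimensions, so I would adopt the canonical reading above.
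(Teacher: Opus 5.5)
Your proof is the paper's proof. You reduce to $A=kQ/I$ by Gabriel's theorem, use Proposition~\ref{Cartan} to see that the Gram matrix of $\langle\cdot,\cdot\rangle$ is the Cartan matrix (up to transpose) padded with zeros, and use Proposition~\ref{complex} to identify symmetry of the form with $M_*(\cs)\cong\Sigma DM_*(\cs)$.

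Your caveat about Proposition~\ref{complex} is well taken, and the canonical reading is exactly what the paper's argument establishes. Its proof only tests the canonical comparison map, namely $\phi$ on the $A^{\ten n}$ and the identity on the $D(A^{\ten n})$. An abstract isomorphism of the underlying complexes of vector spaces in fact always exists, because $M_*(\cs)$ and $\Sigma DM_*(\cs)$ have the same term dimensions and the same ranks of differentials (note $\operatorname{rank}\tau=\operatorname{rank}D\tau$).
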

\begin{proof}
 The equivalence follows immediately from Proposition~\ref{complex} (which links the symmetry of the form to the self‑duality of the mixed complex), Proposition~\ref{Cartan} (which identifies the form with the Cartan matrix) and Gabriel’s Structure Theorem for basic algebras.   
\end{proof}

\begin{corollary}\label{iso}
If $A$ is symmetric, then there is a canonical isomorphism
\[HH_{*}^{sg}(A)\cong HH_{*}(\cs).\]
\end{corollary}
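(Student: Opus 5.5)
The plan is to compare the two sides degree by degree, using the comparison already carried out for a Frobenius algebra above. That comparison gives $HH^{sg}_p(A)\cong HH_p(\cs)$ for every $p\notin\{0,1\}$, and a symmetric algebra is Frobenius with trivial Nakayama automorphism $\nu=\mathrm{id}$, so those isomorphisms apply directly. The remaining work is in degrees $0$ and $1$.

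On the singular side, use the complete resolution $CR$ with $\nu=\mathrm{id}$. In low degrees $A\otimes_{A^e}CR$ reads
\[
A\otimes_{A^e}P_1\to A\otimes_{A^e}P_0\xrightarrow{\ \theta\ } A\otimes_{A^e}P_0^{\vee}\to A\otimes_{A^e}P_1^{\vee}.
\]
Here $\theta$ is induced by $P_0\to A\to P_0^{\vee}$, and the second map $A\to P_0^\vee\cong DP_0$ is the dual of the augmentation, via $DA\cong A$. Using $A\otimes_{A^e}P^\vee_p\cong \Hom_{A^e}(P_p,DA)$ as in the case $p\le -1$, this becomes
\[
\cdots\to HH\text{-chains}\to A/[A,A]\xrightarrow{\ \bar\theta\ } D(A/[A,A])\to\cdots .
\]
From this we get $HH^{sg}_1(A)\cong\Ker\bar\theta$ and $HH^{sg}_0(A)\cong\operatorname{Coker}\bar\theta$. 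On the other side, the theorem of \cite{WAK} gives the double Hochschild complex with middle map $\tau$, which induces $\bar\tau: A/[A,A]\to D(A/[A,A])$. So $HH_1(\cs)\cong\Ker\bar\tau$ and $HH_0(\cs)\cong\operatorname{Coker}\bar\tau$, matching the four-term exact sequence above.

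The key step is to identify $\bar\theta$ with $\bar\tau$ up to isomorphism of source and target. Concretely, I would use the standard bar resolution, or for $A=kQ/I$ the minimal one, and compute $\theta$ explicitly. Let $\{x_i\}$ be a basis of $A$ and $\{x_i^\vee\}$ the dual basis with respect to the symmetrizing form $t$. The composite $A\otimes A\to A\to D(A\otimes A)\cong A\otimes A$ then sends $1\otimes 1$ to the Casimir element $\sum_i x_i\otimes x_i^\vee$. After applying $A\otimes_{A^e}-$ and identifying $DA\cong A$ via $t$, $\bar\theta(a)$ becomes the functional $b\mapsto t\bigl(\sum_i x_i a x_i^\vee b\bigr)$. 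The standard identity
\[
\sum_i t(x_i a x_i^\vee b)=\operatorname{Tr}(x\mapsto axb)=\langle a,b\rangle
\]
holds for symmetric algebras: write $axb=\sum_i t(axb\,x_i^\vee)x_i$ and take the trace $\sum_i t(a x_i b x_i^\vee)$, then use symmetry of $t$ to cycle the factors. This identifies $\bar\theta$ with $\bar\tau$.

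I expect the main obstacle to be exactly this identification, in particular tracking the signs and the identification $DA\cong A$ so that the result is $\tau$ itself and not its transpose. This is where symmetry, rather than mere Frobenius-ness, enters. Proposition~\ref{complex} helps here: its symmetry condition holds for symmetric $A$, because a symmetric algebra has symmetric Cartan matrix, by Proposition~\ref{Cartan} after passing to the basic algebra, which is again symmetric. So the transposition ambiguity is harmless. Finally, I would check that the identifications in degrees $p\neq 0,1$ and in degrees $0,1$ all come from a single map of complexes. This map is $A\otimes_{A^e}CR\to M_*(\cs)$, induced by comparing $P_*$ with the bar resolution. That makes the isomorphism canonical.
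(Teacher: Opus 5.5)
Your proposal is correct, but it handles degrees $0$ and $1$ by a different mechanism than the paper.

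The paper argues abstractly. A symmetric algebra has symmetric Cartan matrix, so Theorem~\ref{main} gives $DM_*(\cs)\cong\Sigma^{-1}M_*(\cs)$, and the paper combines this with the Eu--Schedler duality for singular Hochschild homology. The middle map of $A\otimes_{A^e}CR$ is never compared with $\tau$.

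You compute that map instead: for the bar resolution it is the Casimir (norm) map $a\mapsto\sum_i x_iax_i^\vee$, and the trace identity turns it into $\tau$. With $P_*$ the bar resolution this even gives an isomorphism of complexes $A\otimes_{A^e}CR\cong M_*(\cs)$. The two halves match term by term: $A\otimes_{A^e}P_*\cong C_*(A)$ and $A\otimes_{A^e}P_*^\vee\cong\Hom_{A^e}(P_*,DA)\cong DC_*(A)$. So all degrees are treated at once, and ``canonical'' acquires a concrete meaning.

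What each route buys:
\begin{itemize}
\item The paper's route is shorter, but it depends on Theorem~\ref{main}, hence on the basic and algebraically closed hypotheses. Also, the two dualities it cites only pair degree $0$ with degree $1$ inside each theory. Matching $HH^{sg}_1(A)=\Ker\bar\theta$ with $HH_1(\cs)=\Ker\bar\tau$ still requires $\operatorname{rank}\bar\theta=\operatorname{rank}\bar\tau$. That is exactly what your identification provides.
\item Your route needs neither Theorem~\ref{main} nor Eu--Schedler, and it works for any symmetric algebra over a field.
\end{itemize}

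One simplification concerns the transposition issue. Your stated justification (``cycle the factors'') by itself only produces the transpose. Your detour through Proposition~\ref{Cartan} and the basic algebra is not needed, and as phrased it only shows the form is symmetric on the basic algebra, not on $A$. Since $t$ is symmetric, $\{x_i^\vee\}$ and $\{x_i\}$ are again dual bases, so $\sum_i x_i\otimes x_i^\vee=\sum_i x_i^\vee\otimes x_i$. Cycling then gives $\sum_i t(x_iax_i^\vee b)=\sum_i t(ax_ibx_i^\vee)=\langle a,b\rangle$ directly. Equivalently, the bimodule isomorphism $A\cong DA$ yields $\mathrm{Tr}(\lambda_a\rho_b)=\mathrm{Tr}(\lambda_b\rho_a)$.
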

\begin{proof}
    A symmetric algebra has a symmetric Cartan matrix. Combining this with Theorem \ref{main} and the isomorphism $DHH_n^{sg}(A, A)\cong HH_{n+1}^{sg}(A,A)$ \cite{EuSchedler09}, we conclude that the canonical isomorphism holds for all degrees.
\end{proof}

\begin{remark}
Corollary \ref{iso} does not hold for the Frobenius algebra of Example \ref{3.6}.   
The isomorphism $M_*(\cs)\cong\Sigma DM_*(\cs)$ no longer holds if $A$ is merely Frobenius (selfinjective), either. See Example \ref{ex}. 
\end{remark}

Suppose that $A$ is a symmetric algebra ($A\cong DA$ as $A^e$-modules), then:
\begin{align*}
	D\Hom(P_{i},P_{j}) &\cong \Hom(P_{j}, P_{i}\otimes_{A}DA)\\
	&\cong \Hom(P_{j}, P_{i}\otimes_{A}A)\\
	&\cong \Hom(P_{j}, P_{i}).
\end{align*}

If $A$ is Frobenius with Nakayama automorphism $\nu: A\to A$, then $A_{\nu}\cong DA$ and:
\begin{align*}
	D\Hom(P_{i},P_{j}) &\cong \Hom(P_{j}, P_{i}\otimes_{A}DA)\\
	&\cong \Hom(P_{j}, P_{i}\otimes_{A}A_{\nu})\\
	&\cong \Hom(P_{j}, (P_{i})_{\nu}).
\end{align*}
We have $(P_{i})_{\nu}=P_{\nu({i})}$ for some permutation $\nu: Q_{0}\to Q_{0}$. Thus $\langle\cdot,\cdot\rangle$ is not symmetric, but $\langle e_{i}, e_{j}\rangle=\langle e_{j},e_{\nu({i})}\rangle$ for all $i,j\in Q_{0}$.

\begin{example}\label{ex}[A Frobenius algebra with non-symmetric Cartan matrix]
Let $k$ be a field and consider the quiver $Q$ arranged as a triangle:
\[
\begin{tikzcd}[column sep=1.2cm, row sep=1cm]
& 1 \ar[dl, "\alpha" swap] & \\
2 \ar[rr, "\beta"'] & & 3 \ar[ul, "\gamma" swap]
\end{tikzcd}
\]
with the relation that all paths of length $2$ are zero:
\[
I = \langle \alpha\beta,\ \beta\gamma,\ \gamma\alpha \rangle \subseteq kQ.
\]
The algebra $A = kQ/I$ is $6$-dimensional and Frobenius (self-injective). Its Cartan matrix is
\[
C = \begin{pmatrix}
1 & 0 & 1 \\
1 & 1 & 0 \\
0 & 1 & 1
\end{pmatrix},
\]
which is clearly non-symmetric. 
By Theorem \ref{main}, the isomorphism $DM_*(\cs)\cong\Sigma ^{-1} M_*(\cs)$ does not hold for this algebra.
\end{example}

\section*{Acknowledgments}
The first author is deeply grateful to Professor  Bernhard Keller for many helpful discussions. The first author acknowledges support by Taiyuan Normal University
and the excellent working conditions at Institut de Math\'ematiques de Jussieu-Paris Rive
Gauche (IMJ-PRG) and Universit\'e Paris Cit\'e where part of this project was completed. This research was partially supported by the National Natural Science Foundation of China (Grant No. 12301055).

\end{document}